
\documentclass[journal]{IEEEtran}

\usepackage{paralist}
\ifCLASSINFOpdf
   \usepackage[pdftex]{graphicx}
\else
   \usepackage[dvips]{graphicx}
\fi
%
%

%
\usepackage{amsmath,amssymb,amsfonts,amsthm}
\ifCLASSOPTIONcompsoc
  \usepackage[caption=false,font=normalsize,labelfont=sf,textfont=sf]{subfig}
\else
  \usepackage[caption=false,font=footnotesize]{subfig}
\fi
\usepackage{url}


\usepackage{algorithm,algorithmic}
\usepackage{enumitem}
\usepackage{afterpage}
\allowdisplaybreaks

\newtheorem{thm}{Theorem}[section]
\newtheorem{rmk}{Remark}[section]
\newtheorem{assump}{Assumption}[section]
\newtheorem{alg}{Algorithm}[section]

\hyphenation{op-tical net-works semi-conduc-tor}

\begin{document}
%
\title{An Adaptive Pole-Matching Method for Interpolating Reduced-Order Models}
%
%
%

\author{Yao~Yue,~\IEEEmembership{} 
        Lihong~Feng,~\IEEEmembership{} 
        and~Peter~Benner,~\IEEEmembership{}
        \thanks{Y. Yue, L. Feng and P. Benner are with Max Planck Institute for Dynamics of Complex Technical Systems, Magdeburg}
        \thanks{Manuscript submitted on July 30, 2019.}%
\thanks{This paper is an expanded version from
the IEEE MTT-S International Conference on Numerical Electromagnetic and Multiphysics Modeling and Optimization,
Boston, MA, USA, May 29-31, 2019.}\thanks{\textcopyright2019 IEEE.  Personal use of this material is permitted.  Permission from IEEE must be obtained for all other uses, in any current or future media, including reprinting/republishing this material for advertising or promotional purposes, creating new collective works, for resale or redistribution to servers or lists, or reuse of any copyrighted component of this work in other works.}}

\maketitle

\begin{abstract}
  An adaptive parametric reduced-order modeling method based on interpolating poles of reduced-order models is proposed in this paper.
  To guarantee correct interpolation, a pole-matching process is conducted to determine
  which poles of two reduced-order models correspond to the same parametric pole.
  First, the pole-matching in the scenario of parameter perturbation is discussed.
  It is formulated as a combinatorial optimization problem and solved by a branch and bound algorithm.
  Then, an adaptive framework is proposed to build repository ROMs at adaptively chosen parameter values, which well represent the parameter domain of interest.
  To achieve this, we propose techniques including a predictor-corrector strategy and an adaptive refinement strategy, which enable us to use larger steps to explore the parameter domain of interest with good accuracy.
The framework also consists of regression as an optional post-processing phase to further reduce the data storage.
  The advantages over other parametric reduced-order modeling approaches are,
  e.g., compatibility with any model order reduction method, constant size of the parametric reduced-order model with respect to the number of parameters, and capability to deal with complicated parameter dependency.
\end{abstract}

\begin{IEEEkeywords}
  numerical acceleration techniques, reduced order modeling, model-order reduction, frequency domain techniques, spectrum analysis
\end{IEEEkeywords}

%
\IEEEpeerreviewmaketitle

\section{Introduction}
%
%
%
%
\IEEEPARstart{P}{arametric} model order reduction (PMOR)~\cite{morBenGW15} has become a popular tool in the last decades to drastically reduce the computational cost of model-based parametric studies in many fields in engineering, e.g., circuit simulations~\cite{morFenYBetal16}, structural mechanics~\cite{YYKM11,SMDE14} and chromatography~\cite{morZhaFLetal14}.

Among the many methods proposed, projection-based PMOR methods have received the most research attention~\cite{morRozHP08,morBauBBetal11, morBenGW15}. Most of these methods first build a global basis by collecting data/derivative information at different parameter values, e.g., snapshots for time-domain systems~\cite{morRozHP08} and (cross-)moments in frequency-domain systems~\cite{morBauBBetal11}. Then, these methods project the parametric full-order model~(FOM) onto the subspace spanned by the basis to compute a parametric reduced-order model (pROM). However, these methods have several disadvantages. First, they assume that a parametric FOM is available in the state-space form, which is not always possible in industrial applications. For example, many engineering programs can only compute a FOM from the original partial differential equations at a given parameter value and they may use different meshes for discretization at different parameter values. In addition, they tend to lose efficiency when the number of parameters increases because the dimension of the subspace often increases fast with the number of parameters.

Recently, there are some research efforts that build pROMs by interpolating local matrices or bases.
In~\cite{morAmsF08}, it was proposed to interpolate the basis matrices on the geodesic of the Stiefel manifold.
However, the costs of both data storage and computation are high and according to our experience in~\cite{YFBE19}, it is difficult to achieve high accuracy with this approach.
Another line of research is to conduct PMOR via the interpolation of local matrices.
As is shown in~\cite{morPanMEetal10}, directly interpolating the local reduced-order state-space matrices does not work in general since the ROMs are usually in different state-space coordinate systems.
Therefore, it was proposed in~\cite{morPanMEetal10} to compute a global basis from the local bases and to transform all ROMs to a consistent state-space coordinate system defined by the global basis.
Another approach was proposed in~\cite{morAmsF11} to apply a congruence transformation to obtain ``consistent bases'' in a heuristic sense, which is done by solving a Procrustes optimization problem.
It has also been proposed to interpolate the local frequency response functions (FRFs)~\cite{morBauB09,morBauBGetal11}.
However, as we showed in~\cite{YFBE19}, these methods do not work in general.
For a comprehensive review of these methods, we refer to~\cite{YFBE19}.

In~\cite{YueFB18,YFBE19}, we proposed a pole-matching based parametric reduced-order modeling method, which builds a pROM by interpolating given nonparametric reduced-order models (ROMs).
It interpolates the positions and residues of all poles of the local ROMs, which has a clear physical meaning and removes all additional degrees of freedom introduced by realization.
This method only requires these ROMs to be accurate enough, no matter by what methods they are built. We showed that this method can even interpolate a ROM built by  a projection-based method and a ROM built by a data-driven method. To match the poles between different ROMs, we proposed to solve a combinatorial optimization problem. However, we did not provide an efficient algorithm to solve this optimization problem: using a brutal-force method quickly becomes too expensive computationally. Furthermore, misleading matching results may occur if the pole distributions of the two ROMs are very different.

In this paper, we improve the efficiency and robustness of the pole-matching based parametric reduced-order modeling method using the following techniques:
\begin{enumerate}
\item The branch and bound method for pole-matching. To guarantee correct pole-matching, we propose to use the optimization approach only in the case of pole perturbation, i.e., the corresponding poles of the two ROMs only differ slightly. The proposed branch and bound method is very efficient for this scenario because most enumeration cases are normally branched out thanks to the good initial guess.
\item An adaptive framework for parametric reduced-order modeling. This framework serves as an efficient and accurate way to select the parameter values adaptively, at which we build repository ROMs in order to represent the parameter range of interest well. A predictor-corrector technique and an adaptive refinement strategy are proposed to explore the parameter domain of interest using larger-steps with good accuracy. A regression technique is also proposed as a post-processing phase to further reduce the data storage.
\end{enumerate}

This paper is extended from our conference paper~\cite{YueFB19}.
Compared to~\cite{YueFB19}, we propose a new adaptive parametric reduced-order modeling framework, which consists of new techniques like adaptive refinement and regression. The new framework is more accurate and more robust.
Our numerical results show that the new method not only improves the relative error from a magnitude of $10^{-2}$ to a magnitude of $10^{-4}$, but also saves the storage of the pROM. Last but not least, we discuss the branch and bound method in more detail.

This paper is constructed as follows. In Section~\ref{sec:background}, we briefly review the results in~\cite{YueFB18}. Then in Section~\ref{sec:bb}, we discuss matching of perturbed poles and propose the branch and bound method. Based on this, a framework to adaptively build a pROM is proposed in~\ref{sec:continuation}. We end the paper with the numerical results in Section~\ref{sec:num_results} and conclusions in Section~\ref{sec:con}.

\section{Background: the Pole-Matching PMOR Method}\label{sec:background}
Assume that we have a series of ROMs and denote the ROM built for the parameter value $p_i$ ($i=1,2,\ldots,n_p$) by $(A^{(i)},B^{(i)},C^{(i)})$: 
\begin{align}
  \big( s I - A^{(i)} \big) x^{(i)}(s)&=B^{(i)} u(s), \nonumber \\ \label{eq:para_ROM}
  y(s)&=C^{(i)} x^{(i)}(s),
\end{align}
where $A^{(i)}\in\mathbb{R}^{k \times k}$, $B^{(i)}\in\mathbb{R}^{k \times 1}$, and $C^{(i)}\in\mathbb{R}^{1 \times k}$. Assume further that for each $i$, all eigenvalues of $A^{(i)}$ are simple.

\subsection{The Pole-Residue Realization}
Since a ROM has infinitely many equivalent realizations, the simple idea of interpolating $A^{(i)}$, $B^{(i)}$ and $C^{(i)}$ to obtain a pROM normally does not work~\cite{morPanMEetal10}. In~\cite{YueFB18}, we proposed to convert all ROMs to a unified pole-residue realization
\begin{align}
  \big( s I - \Lambda^{(i)} \big) x^{(i)}(s)&=\overline{B}^{(i)} u(s), \nonumber \\ \label{eq:mmr_ROM}
  y(s)&=\overline{C}^{(i)} x^{(i)}(s),
\end{align}
where using the notation $M^{\alpha,{T}}_\beta$ for $(M_\beta^\alpha)^{T}$, we define
\begin{align}\label{eq:mmr_diag}
  \Lambda^{(i)}&=\text{diag}\left[\Lambda_1^{(i)}, \Lambda_2^{(i)}, \ldots, \Lambda_m^{(i)}\right],\\
  \overline{B}^{(i)}&=\left[\overline{B}^{(i),T}_1,\overline{B}^{(i),T}_2,\ldots,\overline{B}^{(i),T}_m\right]^T,\\
  \overline{C}^{(i)}&=\left[\overline{C}^{(i)}_1,\overline{C}^{(i)}_2,\ldots,\overline{C}^{(i)}_m\right].
\end{align}
The pole-residue realization is closely related to the rational form of the transfer function $H^{(i)}(s)=\frac{y(s)}{u(s)}$ of~\eqref{eq:para_ROM}:
\begin{equation}
  H^{(i)}(s)
  =\sum_{j=1}^{n_s}\frac{\overline{C}^{(i)}_j}{s-\lambda^{(i)}_j}+\sum_{j=1}^{n_d}\frac{\overline{C}^{(i)}_{j,1}(s-a^{(i)}_j)-\overline{C}^{(i)}_{j,2}b^{(i)}_j}{(s-a^{(i)}_j)^2+(b^{(i)}_j)^2},
\end{equation}
where for a real eigenvalue $\lambda_j^{(i)}$, we have $\Lambda_j^{(i)}=\lambda_j^{(i)}$, $\overline{B}^{(i)}_j=1$ and $\overline{C}^{(i)}_j\in\mathbb{R}$, which corresponds to the term
\begin{equation}
  \frac{\overline{C}^{(i)}_j}{s-\lambda^{(i)}_j} = \overline{C}^{(i)}_j (s-\lambda^{(i)}_j)^{-1} \overline{B}^{(i)}_j,
\end{equation}
and for a pair of conjugate complex eigenvalues $a_j \pm \imath b_j$, we have $\Lambda_j^{(i)}\in\mathbb{R}^{2\times2},\overline{B}^{(i)}_j\in\mathbb{R}^{2\times 1},\overline{C}^{(i)}_j\in\mathbb{R}^{1\times 2}$, which corresponds to the term
\begin{align*}
  &\frac{\overline{C}^{(i)}_{j,1}(s-a^{(i)}_j)-\overline{C}^{(i)}_{j,2}b^{(i)}_j}{(s-a^{(i)}_j)^2+(b^{(i)}_j)^2} \\
    =& \left[\overline{C}^{(i)}_{j,1},\,\, \overline{C}^{(i)}_{j,2}\right] \left(
     sI - \left[
       \begin{array}{cc}
         a^{(i)}_j& b^{(i)}_j\\
         -b^{(i)}_j& a^{(i)}_j
       \end{array}
       \right]\right)^{-1} \left[ 
       \begin{array}{c}
         1 \\ 0
       \end{array}
       \right] \\[3ex]
     \stackrel{\triangle}{=}& \, \overline{C}^{(i)}_j (s-\Lambda^{(i)}_j)^{-1} \overline{B}^{(i)}_j.
\end{align*}

Therefore, the positions of the poles are stored in $\Lambda^{(i)}$ and the residues of poles are stored in $\overline{C}^{(i)}$. The matrix $\overline{B}^{(i)}$ contains no useful information: $\overline{B}^{(i)}_j\equiv 1$ for a real pole and $\overline{B}^{(i)}_j \equiv [1, 0]^T$ for a pair of conjugate complex poles.

For more details, we refer to~\cite{YueFB18,YFBE19}. 

\subsection{Efficient Storage}

Assuming that $A^{(i)}$ has $n_s$ real eigenvalues and
$n_d$ pairs of conjugate complex eigenvalues, we can store the ROM in the pole-residue realization with two matrices~\cite{YueFB18}:
\begin{align}\label{eq:mmr_storage}
  D_i \in \mathbb{C}^{n_d \times 4} \quad \text{and} \quad S_i \in \mathbb{C}^{n_s\times 2}.
\end{align}
The $j$-th row of $S_i$ is $(\lambda^{(i)}_j,\overline{C}^{(i)}_j)$, which corresponds to the real pole 
$  \frac{\overline{C}^{(i)}_j}{s-\lambda^{(i)}_j},$
while the $j$-th row of $D_i$ is $(a^{(i)}_j,b^{(i)}_j,\overline{C}^{(i)}_{j,1},\overline{C}^{(i)}_{j,2})$, which corresponds to a pair of conjugate complex poles
$  \frac{\overline{C}^{(i)}_{j,1}(s-a^{(i)}_j)-\overline{C}^{(i)}_{j,2}b^{(i)}_j}{(s-a^{(i)}_j)^2+(b^{(i)}_j)^2} $.
The order of the rows in $D_i$ and $S_i$ depends on their order in $\Lambda^{(i)}$: the order of rows in $S_i$ ($D_i$) is the order of real (complex conjugate) poles in $\Lambda^{(i)}$.

\section{Matching of Perturbed Poles:\newline a Branch and Bound Method}\label{sec:bb}
To interpolate two sufficiently accurate ROMs, say $(D_1,S_1)$ and $(D_2,S_2)$, to obtain a pROM, we first determine which poles in $(D_1,S_1)$ and $(D_2,S_2)$ correspond to the same parametric pole. This is what we call pole-matching. The order of the poles are influenced by: 
\begin{inparaenum}
\item the MOR algorithm;
\item the eigenvalue decomposition algorithm~\cite{matrix_comp96} that we use to compute the pole-residue realization~\eqref{eq:mmr_ROM}.
\end{inparaenum}
In this section, we discuss matching of perturbed poles under Assumption~\ref{assump:bb}. The matching of distant poles will be the topic of Section~\ref{sec:continuation}.

\begin{assump}\label{assump:bb} The assumptions for the branch and bound method.
  \begin{enumerate}
\item The sizes of $D_1$ and $S_1$ equal those of $D_2$ and $S_2$, respectively;
\item All poles of the ROM $(D_2,S_2,)$ are perturbations of poles of ROM $(D_1,S_1)$, but for a pair of perturbed poles, their order in the $D$/$S$ vectors may be different. More precisely, $r(\text{ROM}_1, \text{ROM}_2)$ is small, with $r$ defined later in~\eqref{eq:rom_dist}.
\end{enumerate}
\end{assump}

Under Assumption~\ref{assump:bb}, pole-matching can be done by solving an optimization problem, i.e., find an optimal way to rearrange the rows of $(D_2,S_2)$ so that it is the closest to $(D_1,S_1)$ in some sense.

\subsection{Matching of Perturbed Poles via Optimization}
We first define two mapping vectors: $v_d=[v_{d,1},\allowbreak v_{d,2},\allowbreak \ldots,\allowbreak v_{d,n_d}]$ for conjugate complex poles and $v_s=[v_{s,1},v_{s,2},\ldots,v_{s,n_s}]$ for real poles, which satisfy
\begin{align}
  \{v_{d,1}\} \cup \{v_{d,2}\} \cup \cdots \cup \{v_{d,n_d}\}=\{1,2,\ldots,n_d\},&&\label{eq:constr_vd} 
\end{align}
and
\begin{align}
  \{v_{s,1}\} \cup \{v_{s,2}\} \cup \cdots \cup \{v_{s,n_s}\}=\{1,2,\ldots,n_s\}.&&\label{eq:constr_vs}
\end{align}
Then we define the mapping $\mathcal{M}$, which rearranges the rows of $D$ (or $S$) according to the index vector $v_d$ (or $v_s$):
\begin{align}\label{eq:M_vd_defi}
  \mathcal{M}(v_d;D)&=[d_{v_{d,1}}^T, d_{v_{d,2}}^T, \ldots, d_{v_{d,n_d}}^T]^T,\\
  \mathcal{M}(v_s;S)&=[s_{v_{s,1}}^T, s_{v_{s,2}}^T, \ldots, s_{v_{s,n_s}}^T]^T,\label{eq:M_vs_defi}
\end{align}
where $D=[d_1^T,d_2^T,\ldots,d_{n_d}^T]^T$ and $S=[s_1^T,s_2^T,\ldots,s_{n_s}^T]^T$.

In the case of pole perturbation, we achieve pole-matching by solving the following optimization problem
\begin{align}
  \min_{v_d,v_s} f(v_d,v_s)=&\left\| D_1W_d- \mathcal{M}(v_d,D_2)W_d\right\|_F^2 + \nonumber \\  
  &\left\| S_1W_s- \mathcal{M}(v_s,S_2)W_s\right\|_F^2, \label{eq:objective}
\end{align}
where $W_d=\text{diag}\{w_p,w_p,w_r,w_r\}$ and $W_s=\text{diag}\{w_p,w_r\}$ are weighting matrices. The scalars $w_p$ and $w_r$ are the weights for the positions and residues of the poles, respectively.
Now we give the motivation of the optimization problem.
Assume that all poles are simple and the perturbation is sufficiently small.
If all poles are correctly matched, the objective function is very small;
otherwise, the objective function must be much larger.

\subsection{A Branch and Bound Algorithm}
To solve the combinatorial optimization problem~\eqref{eq:objective}, the exhaustive search method requires $n_d ! \times n_s !$ enumerations, which is computationally feasible only for very small ROMs. In this section, we discuss efficient solution of~\eqref{eq:objective}.

First, we observe that the optimization problem~\eqref{eq:objective} can be decoupled into two independent optimization problems:
\begin{align}\label{eq:obj_vd}
   \min_{v_d} f_d(v_d)=\left\| D_1W_d- \mathcal{M}(v_d,D_2)W_d\right\|_F^2 
\end{align}
and
\begin{align}\label{eq:obj_vs}
  \min_{v_s} f_s(v_s)=\left\| S_1W_s- \mathcal{M}(v_s,S_2)W_s\right\|_F^2.
\end{align}
Their solutions $v_d^*$ and $v_s^*$ solve the original optimization problem~\eqref{eq:objective}:
\begin{align}
  \min_{v_d,v_s} f(v_d,v_s)&=\min_{v_d} f_d(v_d)+\min_{v_s} f_s(v_s),\\
  \text{arg} \min_{v_d,v_s} f(v_d,v_s) &= \Big( \text{arg}\min_{v_d} f_d(v_d),\, \text{arg}\min_{v_s} f_s(v_s) \Big).
\end{align}
Using this method alone, we reduce the computational cost of~\eqref{eq:objective} from $n_d ! \times n_s !$ enumerations to $n_d ! + n_s !$ enumerations.
In the following discussions, we focus on the solution of~\eqref{eq:obj_vd}.
The optimization problem~\eqref{eq:obj_vs} can be solved similarly.

The second technique that we use to further reduce the computational cost of the combinatorial optimization problem~\eqref{eq:objective} is the branch and bound method~\cite{CLAU99}.
It can drastically reduce this computational cost in the scenarios that are the most useful in pole-matching, i.e., \ref{sce:1} and \ref{sce:2} on page~\pageref{sce:1}.
The efficiency of the branch and bound method for pole matching relies on the following theorem.

\begin{thm}\label{thm:bb}
  Assume that $1\leq i,j \leq n_d$ are two given indices ($i \neq j$). If there exist two mapping vectors $v_{d}^{(1)}$ and $v_{d}^{(2)}$  satisfying~\eqref{eq:constr_vd} and
  \begin{align}
    v_{d,i}^{(1)}&=v_{d,j}^{(2)}, \quad v_{d,j}^{(1)}=v_{d,i}^{(2)}, \label{eq:thm_1}\\
    v_{d,k}^{(1)}&=v_{d,k}^{(2)}, \quad (\forall 1\leq k \leq n_d,\, k\neq i \text{ and } k\neq j)\label{eq:thm_2}\\
    f_d &\left(v_d^{(2)} \right) - f_d \left(v_d^{(1)} \right) = c > 0,\label{eq:thm_3}
  \end{align}
  then any mapping vector $v_d$ satisfying~\eqref{eq:constr_vd} and
  \begin{equation}\label{eq:vd_branch_out}
    v_{d,i}=v_{d,i}^{(2)} \quad \text{and} \quad v_{d,j}=v_{d,j}^{(2)}
  \end{equation}
cannot be the optimal solution of~\eqref{eq:obj_vd}.
\end{thm}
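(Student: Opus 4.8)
The plan is to exploit the fact that the objective $f_d$ in~\eqref{eq:obj_vd} is \emph{separable} over the rows of the matching. Since the squared Frobenius norm of a matrix equals the sum of the squared Euclidean norms of its rows, and since (by the definition~\eqref{eq:M_vd_defi} of $\mathcal{M}$) the $k$-th row of $D_1 W_d - \mathcal{M}(v_d,D_2)W_d$ is $(d_{1,k}^T - d_{2,v_{d,k}}^T)W_d$, where $d_{1,k}^T$ and $d_{2,l}^T$ denote the $k$-th row of $D_1$ and the $l$-th row of $D_2$, I would first rewrite
\begin{equation*}
  f_d(v_d) = \sum_{k=1}^{n_d} \phi_k(v_{d,k}), \qquad \phi_k(l) := \left\| (d_{1,k}^T - d_{2,l}^T)\, W_d \right\|_2^2 .
\end{equation*}
The crucial structural point is that the cost $\phi_k(l)$ of matching row $k$ of $D_1$ to row $l$ of $D_2$ depends only on the ordered pair $(k,l)$ and not on how the remaining rows are matched; this recasts~\eqref{eq:obj_vd} as a linear assignment problem.

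Next I would read off the meaning of $c$ from the hypotheses. Writing $a := v_{d,i}^{(1)}$ and $b := v_{d,j}^{(1)}$, conditions~\eqref{eq:thm_1}--\eqref{eq:thm_2} say that $v_d^{(1)}$ and $v_d^{(2)}$ coincide at every position except that they interchange the values at positions $i$ and $j$, so that $v_d^{(2)}$ sends $i \mapsto b$ and $j \mapsto a$. Because every term $\phi_k(v_{d,k})$ with $k \neq i,j$ is identical for the two vectors, the separable formula collapses to
\begin{equation*}
  c = f_d(v_d^{(2)}) - f_d(v_d^{(1)}) = \big[\phi_i(b) + \phi_j(a)\big] - \big[\phi_i(a) + \phi_j(b)\big].
\end{equation*}
Thus $c$ is exactly the cost increment of trading the two assignments $i \mapsto a$, $j \mapsto b$ for $i \mapsto b$, $j \mapsto a$, and, by separability, this increment does not depend on any of the other matchings.

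The conclusion then follows from a swap argument. Given any mapping vector $v_d$ satisfying~\eqref{eq:constr_vd} together with~\eqref{eq:vd_branch_out}, i.e.\ $v_{d,i} = b$ and $v_{d,j} = a$, I would define $v_d'$ by interchanging its values at positions $i$ and $j$ (setting $v_{d,i}' = a$ and $v_{d,j}' = b$) and leaving all other entries unchanged. Since~\eqref{eq:constr_vd} forces $v_d$ to be a permutation of $\{1,\ldots,n_d\}$, interchanging two of its entries yields another permutation, so $v_d'$ is feasible. Applying the separable formula once more and cancelling the identical terms for $k \neq i,j$ gives
\begin{equation*}
  f_d(v_d) - f_d(v_d') = \big[\phi_i(b) + \phi_j(a)\big] - \big[\phi_i(a) + \phi_j(b)\big] = c > 0,
\end{equation*}
whence $f_d(v_d) > f_d(v_d')$ and $v_d$ cannot be a minimizer of~\eqref{eq:obj_vd}.

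The only delicate point -- and where I expect the ``main obstacle'' to hide -- is the claim that the swap changes the objective by exactly $c$ irrespective of the rest of $v_d$. This is precisely where the separability established in the first step does all the work: it is what guarantees that the terms for $k \neq i,j$ cancel and that the two-term difference is the same for the pair $(v_d^{(1)},v_d^{(2)})$ as for the pair $(v_d',v_d)$. I would therefore state this additivity explicitly and note that it rests only on the Frobenius norm decomposing rowwise, hence on $W_d$ acting within each individual row, so that the cancellation is transparent.
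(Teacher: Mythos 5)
Your proof is correct and follows essentially the same route as the paper: both arguments rest on the row-wise separability of the squared Frobenius norm, which makes the swap of entries $i$ and $j$ change the objective by exactly $c$ independently of the remaining assignments, and then conclude that any $v_d$ satisfying~\eqref{eq:vd_branch_out} is beaten by its swapped counterpart. The paper phrases this as a contradiction with an assumed optimal $v_d^+$ while you give the direct version with the explicit decomposition $f_d(v_d)=\sum_k \phi_k(v_{d,k})$, which is merely a cleaner write-up of the identical idea (and your explicit check that the swap preserves feasibility of~\eqref{eq:constr_vd} is a detail the paper leaves implicit).
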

\begin{proof}
  Assume $v_d^+$ is the optimal solution of~\eqref{eq:obj_vd} satisfying~\eqref{eq:vd_branch_out}.
  We construct $v_d^-$ as
  \begin{align*}
    v_{d,i}^-&=v_{d,j}^+=v_{d,i}^{(1)}, && v_{d,j}^- = v_{d,i}^+=v_{d,j}^{(1)}, \\
    v_{d,k}^-&=v_{d,k}^+, &&(\forall 1\leq k \leq n_d,\, k\neq i \text{ and } k\neq j).
  \end{align*}
  Then,
  \begin{align*}
    f_d(v_d^+)-f_d(v_d^-)=&\left\| D_1W_d- \mathcal{M}(v_d^+,D_2)W_d\right\|_F^2 - \\
    &\left\| D_1W_d- \mathcal{M}(v_d^-,D_2)W_d\right\|_F^2 \\
    =& \left\| \left[
      \begin{array}{c}
        D_{1,i} \\ D_{1,j}
      \end{array}\right] W_d
      - \left[\begin{array}{c}
        D_{2,v_{d,i}^+} \\ D_{2,v_{d,j}^+} 
      \end{array}\right] W_d
      \right\|_F - \\
      &\left\| \left[
      \begin{array}{c}
        D_{1,i} \\ D_{1,j}
      \end{array}\right] W_d
      - \left[\begin{array}{c}
        D_{2,v_{d,i}^-} \\ D_{2,v_{d_j}^-}
      \end{array}\right] W_d
      \right\|_F\\
      =& \left\| \left[
      \begin{array}{c}
        D_{1,i} \\ D_{1,j}
      \end{array}\right] W_d
      - \left[\begin{array}{c}
        D_{2,v_{d,i}^{(2)}} \\ D_{2,v_{d,j}^{(2)}} 
      \end{array}\right] W_d
      \right\|_F - \\
      &\left\| \left[
      \begin{array}{c}
        D_{1,i} \\ D_{1,j}
      \end{array}\right] W_d
      - \left[\begin{array}{c}
        D_{2,v_{d,i}^{(1)}} \\ D_{2,v_{d_j}^{(1)}}
      \end{array}\right] W_d
      \right\|_F\\
      =& \left\| D_1W_d- \mathcal{M}(v_d^{(2)},D_2)W_d\right\|_F^2 - \\
      &\left\| D_1W_d- \mathcal{M}(v_d^{(1)},D_2)W_d\right\|_F^2 \\
      =&f_d \left(v_d^{(2)} \right) - f_d \left(v_d^{(1)} \right)
      = c > 0.
  \end{align*}
  Therefore, $$f_d(v_d^-)=f_d(v_d^+)-c<f_d(v_d^+),$$
  which contradicts the assumption that $v_d^+$ is the optimal solution of $\displaystyle\min_{v_d} f_d(v_d)$ defined in~\eqref{eq:obj_vd}.
\end{proof}

Based on Theorem~\ref{thm:bb}, a branch and bound algorithm is proposed with two motivations to save the computational cost:
\begin{enumerate}
\item Start at a hopefully good initial guess $v_d^{(0)}=(1,2,\ldots,n_d)$, which is computationally free.
\item Try to decrease $f_d$ by swapping any two entries of the current $v_d$ and use the result from Theorem~\ref{thm:bb} to branch out the swaps that we are certain to be unable to further decrease $f_d$.
\end{enumerate}

\begin{rmk} Remarks on the two motivations.
  \begin{enumerate}[label=\textup{\textbf{On Motivation \arabic*}.},align=left]
  \item Although $v_d^{(0)}=(1,2,\ldots,n_d)$ is simple, it is actually often a quite good initial guess under the following assumption:
    {
    \begin{assump}\label{assump:initial_guess}\upshape Conditions in favor of a good initial guess $v_d^{(0)}=(1,2,\ldots,n_d)$. 
      \begin{enumerate}
      \item We use the same MOR method to compute the ROMs;
      \item The MOR algorithm normally preserves the order of parametric eigenvalues when $\|p_1-p_2\|$ is small;
      \item Both ROMs are accurate enough and $\|p_1-p_2\|$ is small enough.
      \end{enumerate}    
    \end{assump}
    }
    Under this assumption, it is reasonable to presume that when the parameter is perturbed,
    \begin{inparaenum}[1)]
    \item The order of the parametric eigenvalues is normally preserved.
    \item Even when the order changes, normally only a small number of indices change at the same time.
    \end{inparaenum}
  \item Since we start from a ``hopefully good'' initial guess, it is quite likely that we fail to decrease $f_d$ by swapping entries of $v_d$.
    This case is actually ideal for us because the failed swap can be branched out and we never need to try it again according to Theorem~\ref{thm:bb}.
    If the initial guess happens to be the optimal solution, we actually just  need to branch out all possibilities, which only needs $n_d(n_d-1)$ enumerations instead of $n_d !$ enumerations.
  \end{enumerate}
\end{rmk}

To keep track of the branched out swaps, we use the matrix $F$ defined as follows:
\begin{equation}\label{defi:F}
  F(i,j)=\left\{
  \begin{array}{ll}
    0, & \text{\parbox[t]{.6\linewidth}{when any $v_d$ with $v_{d,i}=v^{(0)}_{d,j}$ and $v_{d,j}=v^{(0)}_{d,i}$ has been branched out;}}\\[.5ex]
    1, & \text{otherwise.}
  \end{array}
  \right. 
\end{equation}
More specifically, we initially set
\begin{align}
  F=  J_{n_d}- I_{n_d},
\end{align}
where $J_{n_d}, I_{n_d}\in \mathbb{R}^{n_d \times n_d}$ are the matrix of ones and the identity matrix, respectively.
This means that initially, no swap has been branched out: only swapping an index with itself is forbidden.
Note that when we swap the $i$-th and $j$-th entries of $v_d$, we actually swap the $v_{d,i}$-th and $v_{d,j}$-th rows of $D_2$,
or equivalently,  swap the $v_{d,i}$-th and $v_{d,j}$-th entries of $v_d^{(0)}$.

\begin{alg}\normalfont
    A Branch and Bound Method to Solve the Pole-Matching Optimization Problem~\eqref{eq:obj_vd} \label{alg:bb}\\[-2.8\baselineskip]
    \noindent\makebox[\linewidth]{\rule{\linewidth}{0.8pt}}\\[1.3\baselineskip]
     \noindent\makebox[\linewidth]{\rule{\linewidth}{0.4pt}}\vskip-2.8ex
     \begin{algorithmic}[1]
       \STATE Initialize $v_d=(1,2,\ldots,n_d)$, the current lowest objective value $b_l=f_d(v_d)$, and $F=  J_{n_d}- I_{n_d}$.
       \WHILE{$\|F\|_F\neq 0$}
       \FOR{$i=1$ \TO $n_d$}
       \FOR{$j=1$ \TO $n_d$}
       \IF{$ F(v_{d,i},v_{d,j}) =0$}
       \STATE \textbf{continue} (go to line 4)
       \ELSE
       \STATE Set $v_d^*=v_d$. 
       \STATE Swap the $i$-th and the $j$-th entry of $v_d^*$.
       \IF{$f(v_d^*,v_s^{(0)}) \geq b_l$}
       \STATE Update $F(v_{d,i},v_{d,j}) =0$.
       \ELSE
       \STATE Set $v_d=v_d^*$ and $b_l=f_d(v_d^*)$, and break both for loops (go to line 2).
       \ENDIF
       \ENDIF
       \ENDFOR
       \ENDFOR
       \ENDWHILE
     \end{algorithmic}\vskip-.9\baselineskip
      \noindent\makebox[\linewidth]{\rule{\linewidth}{0.8pt}}\vskip-.9\baselineskip
\end{alg}

Now we discuss the definition~\eqref{defi:F}. Assume that we are at $v_d^C$ and we have failed to increase $f_d$ by swapping its $i$-th entry with its $j$-th entry.
  Denote the swapped vector by $v_d^S$. Then $v_d^C$ and $v_d^S$ satisfy the conditions~\eqref{eq:thm_1}, \eqref{eq:thm_2} and~\eqref{eq:thm_3}.
  Therefore, according to Theorem~\ref{thm:bb}, any $v_d$ with $v_{d,i}=v_{d,i}^S=v_{d,j}^C$ and $v_{d,j}=v_{d,j}^S=v_{d,i}^C$ cannot be the solution of the optimization problem~\eqref{eq:obj_vd}. Therefore, we set $F(v_{d,i}^C,v_{d,j}^C)=0$ to forbid this swap, i.e., if the $k$-th and the $l$-th entries of any $v_d$ satisfy
  \begin{equation}\label{eq:vd_swap}
    v_{d,k}=v_{d,i}^C, \quad v_{d,l}=v_{d,j}^C,
  \end{equation}
we do not need to try the swap of the $k$-th and $l$-th entries of $v_d$ since it cannot decrease the objective $f_d$.
Note that the matrix $F$ is normally non-symmetric because if $F(v_{d,j},v_{d,i})=0$, we forbid swapping the $k$-th and the $l$-th entries if $v_{d,k}=v_{d,j}^C$ and $v_{d,l}=v_{d,i}^C$, which is actually a different swap.

As for implementation, we can also use the more compact ``potentially feasible tables\footnote{We use the term ``potentially feasible'' because the swaps recorded in this table might not be feasible, but we have not validated their feasibility yet.}'' $\mathcal{F}_1, \ldots, \mathcal{F}_{v_d}$ as we did in~\cite{YueFB18} instead of using the matrix $F$, especially when $n_d$ is large. The potentially feasible table is defined as
\begin{equation}
  \mathcal{F}_i=\{j| F(i,j)=1\}.
\end{equation}
So when a swap is branched out, it is simply removed from the potentially feasible table.

Now we discuss several scenarios in pole matching.
\begin{enumerate}[label=\textup{\textbf{Scenario \arabic*}.},ref={Scenario \arabic*},align=left]
\item\label{sce:1} All poles naturally match: the most optimistic scenario. This case occurs quite often under Assumption~\ref{assump:initial_guess} as we have discussed there. 
Using the branch and bound algorithm, we just need to verify that the initial $v_d^{(0)}$ is optimal and we only need to enumerate the $v_d$ vector $n_d(n_d-1)$ times instead of $n_d !$ times.
\item\label{sce:2}  All poles do not naturally match, but only a few swapping operations, say, $m$ times, are required to obtain the optimal solution. In general, this scenario is unavoidable. e.g., when the dominances\footnote{In this paper, we use the term ``pole dominance'' to denote the importance of a pole on the system output. We use it as a general term for all kinds of MOR methods. This term is borrowed from a specific case: the dominant pole algorithm~\cite{morMarLP96,SMDE14}, which defines the dominance of a pole of the form $\displaystyle \frac{\overline{C}^{(i)}_j}{s-\lambda^{(i)}_j}$ by $\displaystyle\frac{\big|{\overline{C}^{(i)}_j}\big|}{\big|\Re\big\{\lambda^{(i)}_j\big\}\big|}$ for this specific case.} of two poles cross. The computational cost in this scenario is bounded by $m n_d(n_d-1)$ enumerations. The actual computational cost is normally much lower because many swaps have been branched out at an early stage.
\item\label{sce:3}  Almost all poles between $(D_1,S_1)$ and $(D_2,S_2)$ do not naturally match. In this scenario, the computational cost of optimization is high. Actually, if both ROMs are generated by the same MOR method with some ``algorithmic continuity'', it is not so meaningful to conduct optimization, whose solution can be misleading. In such cases, it is highly possible that the ``perturbation'' is too large, except for some rare cases, e.g.,  the dominances of many pair of poles do cross at the same parameter value. Therefore, we avoid solving the optimization problem in this case, which is not only computationally expensive, but also useless. This is a motivation of the predictor-corrector method that will be presented in Section~\ref{sec:pred_corr}.
\end{enumerate}

\section{The Adaptive Framework to Build the Parametric ROM}\label{sec:continuation}

The goal of this section is to adaptively build a series of repository ROMs, which will be used as interpolates for constructing the pROM.
In this section, we confine our discussion to parametric systems with a single parameter $p\in \mathbb{R}$.
We denote these repository ROMs, whose poles have already been matched to those of all other repository ROMs, by $\overline{\text{ROM}}_1$, $\overline{\text{ROM}}_2$, \ldots, $\overline{\text{ROM}}_N$ built for the parameter values $p_1 <p_2 < \ldots <p_N$, respectively. In this paper, the overline symbol $\overline{\cdot}$ indicates the name or variables of a repository ROM, e.g., $\overline{\text{ROM}}_i$ with its pole-residue realization $(\overline{D}_i,\overline{S}_i)$.

For ease of notation, we introduce a mapping for pole-matching $\mathcal{P}_{D_1}(D_2)$ ($\mathcal{P}_{S_1}(S_2)$). When we match $D_2$ to $D_1$, we denote the matching result of Algorithm~\ref{alg:bb} by $\mathcal{P}_{D_1}(D_2)$. 
 Using this notation, we define the distance between ROM$_1$ $(D_1,S_1)$ and ROM$_2$ $(D_2,S_2)$ as
\begin{align}\label{eq:rom_dist}
  r(\text{ROM}_1, \text{ROM}_2)=&\big\| D_1 W_d - \mathcal{P}_{D_1}(D_2) W_d\big\|_F+\nonumber \\
  &\big\|S_1 W_s - \mathcal{P}_{S_1}(S_2) W_s\big\|_F
\end{align}
and the relative error by
\begin{align}\label{eq:rom_error}
  e(\text{ROM}_1, \text{ROM}_2)=&\frac{\big\| D_1 W_d - \mathcal{P}_{D_1}(D_2)W_d\|_F}{\|D_1\|_F}+\nonumber \\
  &\frac{\big\|S_1 W_s - \mathcal{P}_{S_1}(S_2) W_s\big\|_F}{\|S_1\|_F}.
\end{align}
For convenience, we denote the pole-matched ROM $(\mathcal{P}_{D_1}(D_2),\mathcal{P}_{S_1}(S_2))$ by $\mathcal{P}_{\text{ROM}_1}(\text{ROM}_2)$.

\subsection{The Predictor-Corrector Strategy}\label{sec:pred_corr}
The two motivations of the predictor-corrector strategy are:
\begin{enumerate}[label=\textup{\textbf{Objective \arabic*}.},ref={Objective \arabic*},align=left]
\item\label{cont_obj:1} We want to design a framework, in which \ref{sce:1} occurs most often, \ref{sce:2} occurs sometimes, and \ref{sce:3} occurs rarely.
\item However, we do not want to achieve \ref{cont_obj:1} simply by using small steps in $p$, which is inefficient.
\end{enumerate}

In order to explore the parameter space adaptively, we make the following assumptions on the ROMs.
\begin{assump}\label{assump:accuracy}
  At any feasible parameter value $p$, a ROM of high accuracy can be built at request.
\end{assump}
\begin{assump}\label{assump:pole_change}
  The changes in positions and residues of all poles are small when the change of $p$ is sufficiently small.
\end{assump}
Suppose that we have already computed the repository ROMs: $\overline{\text{ROM}}_1$, $\overline{\text{ROM}}_2$, \ldots, $\overline{\text{ROM}}_i$ and we want to expand the series by adding new repository ROMs.
At a new parameter value $p_{i+1}$ ($p_{i+1}>p_i$), 
we first compute a candidate ROM$_{i+1}^\text{cand}$ and try to match its poles to the repository ROMs. Instead of applying Algorithm~\ref{alg:bb} directly to $\overline{\text{ROM}}_i$ and ROM$_{i+1}^\text{cand}$, we propose to use a predictor-corrector strategy.

We first use the positions and residues of  poles of $\overline{\text{ROM}}_1$, $\overline{\text{ROM}}_2$, \ldots, $\overline{\text{ROM}}_i$ to calculate a ``predicted ROM'' ROM$_{i+1}^\text{pred}$ for $p_{i+1}$. For prediction, we can use extrapolation, e.g., linear, polynomial, spline, etc., on the positions and residues of the poles. If the calculated poles exhibit some noise, we can also use regression. Normally, the poles of ROM$_{i+1}^\text{pred}$ are closer to the poles of ROM$_{i+1}^\text{cand}$ than those of $\overline{\text{ROM}}_i$. To be safe, we check whether $r(\overline{\text{ROM}}_i, \text{ROM}_{i+1}^\text{cand})>r(\text{ROM}_{i+1}^\text{pred},\text{ROM}_{i+1}^\text{cand})$ holds. If it holds, we set $\overline{\text{ROM}}_{i+1}=\mathcal{P}_{\text{ROM}_{i+1}^\text{pred}}(\text{ROM}_{i+1}^\text{cand})$; otherwise, we set $\overline{\text{ROM}}_{i+1}=\mathcal{P}_{\overline{\text{ROM}}_{i}}(\text{ROM}_{i+1}^\text{cand})$.

Note that although we have ``accepted'' the current $\overline{\text{ROM}}_{i+1}$ as a repository ROM, it is not guaranteed yet that the interpolation will give accurate pMOR on the interval $(p_i,p_{i+1})$. Actually, the pole-matching is not necessarily correct because Assumption~\ref{assump:bb} may not be satisfied. However, even if the poles are wrongly matched, we store it for further use because we do not want to waste the computational effort with which we built $\overline{\text{ROM}}_{i+1}$. Therefore, we introduce an index $i_h$ to denote the last index that we are sure to have ``high fidelity'', i.e., it is correctly matched and close enough to $\overline{\text{ROM}}_{i_h-1}$ to produce good interpolation results. Here we assign $i_h=i$, which means that the fidelity of $\overline{\text{ROM}}_{i+1}$ still needs to be checked. More details will be discussed in the next section.

\subsection{An Adaptive Refinement Strategy}
In the previous section, we have discussed the efficient generation and pole-matching of ROMs.
However, this alone does not guarantee good accuracy of the pROM: it is possible that the poles are matched correctly, but the interpolation result does not capture the parametric evolution of the poles well. For example, for a system that has only one pole, although the pole is naturally ``matched'', we still need a fine grid of $p$ to capture the dynamics of the system when the dynamics is complex.

After we have computed $\overline{\text{ROM}}_{i+1}$ using the predictor-corrector strategy, we need to assess whether good accuracy can be achieved in the interval $(p_i,p_{i+1})$ using interpolation. To achieve this, we choose the test point $p^t=\frac{p_i+p_{i+1}}{2}$ and build two ROMs at $p^t$:
\begin{enumerate}
\item $\text{ROM}^{I,t}$ computed by interpolating the repository ROMs.
\item $\text{ROM}^{T,t}$ computed by a MOR algorithm.
\end{enumerate}
Since under Assumption~\ref{assump:accuracy}, $\text{ROM}^{T,t}$ is highly accurate, we regards $\text{ROM}^{I,t}$ as accurate only when
\begin{equation}\label{eq:test_accuracy}
  e(\text{ROM}^{I,t},\text{ROM}^{T,t})<\tau_e,
\end{equation}
where $\tau_e$ is a small real number, e.g., 0.001.
The value of $\tau_e$ should take the error of the MOR algorithm into consideration: $\tau_e$ should be sufficiently larger than an estimated error of the MOR algorithm.
Otherwise, the relationship \eqref{eq:test_accuracy} may never be satisfied despite of refinement of the grid.
In practical computations, we can actually change $\tau_e$ adaptively according to an error bound of $\text{ROM}^{T,t}$ rather than using the same $\tau_e$ for all iterations.

If~\eqref{eq:test_accuracy} is met, we trust the fidelity of  $\overline{\text{ROM}}_{i+1}$ and use the predictor-corrector for the next step.
Otherwise, we need more ROMs to represent the interval $(p_i,p_{i+1})$.
A natural choice is $\text{ROM}^{T,t}$ since it has already been built. Therefore, we assign
\begin{equation}
  \overline{\text{ROM}}_{i+2} \leftarrow \overline{\text{ROM}}_{i+1}, \quad \overline{\text{ROM}}_{i+1} \leftarrow \mathcal{P}_{\overline{\text{ROM}}_i}(\text{ROM}^{T,t}).
\end{equation}
Then, we check the fidelity of the new $\overline{\text{ROM}}_{i+1}$ using the procedure described above:
\begin{itemize}
\item If~\eqref{eq:test_accuracy} is violated, we do a further refinement for the interval $(p_i,p_{i+1})$.
\item Otherwise, we accept that $\overline{\text{ROM}}_{i+1}$ has high fidelity, update $i_h=i+1$ and
  \begin{equation}
    \overline{\text{ROM}}_{i+2} \leftarrow \mathcal{P}_{\overline{\text{ROM}}_{i+1}}(\overline{\text{ROM}}_{i+2})
  \end{equation}
  because $p_{i+1}$ (the former $p^t$) is closer to $p_{i+2}$ (the former $p_{i+1}$) and the pole-matching result is more trustworthy.
  Next, we check whether $\overline{\text{ROM}}_{i+2}$ has high fidelity:
  \begin{itemize}
  \item If it has high fidelity, we set $i_h=i+2$ and end this local refinement.
  \item Otherwise, we do a further refinement for the interval $(p_{i+1},p_{i+2})$ using the procedure above.
  \end{itemize} 
\end{itemize}
For the whole procedure, we refer to Algorithm~\ref{alg:adaptive}.
\subsection{Compact Parametric ROM by Regression}
After we have finished the construction of $\overline{\text{ROM}}_1$, $\overline{\text{ROM}}_2$, \ldots, $\overline{\text{ROM}}_N$ for the whole interval of interest, we can further save data storage by using regression. In this paper, we do regression for each entry of $\overline{D}_i(j,k)$ (the $(j,k)$-th entry of $\overline{D}_i$ built at $p_i$) and $\overline{S}_i(j,k)$ to obtain the parametric forms $\overline{D}(p;j,k)$  and $\overline{S}(p;j,k)$. Using a polynomial of degree $q$ for each entry, we reduce the storage cost from $N(4n_d+2n_s)$ to $(q+1)(4n_d+2n_s)$ since we only need to store the coefficients of the polynomials. This reduction is usually drastic since a low-order polynomial is normally enough when the system dynamics is not too complex.

We summarize all the techniques discussed in Algorithm~\ref{alg:adaptive} with more details.
\begin{alg}\normalfont
  An Adaptive Pole-Matching PMOR method\label{alg:adaptive}
  \\[-1.8\baselineskip]
    \noindent\makebox[\linewidth]{\rule{\linewidth}{0.8pt}}\\[.2\baselineskip]
     \noindent\makebox[\linewidth]{\rule{\linewidth}{0.4pt}}\vskip-2.8ex
     \begin{algorithmic}[1]
       \STATE \textbf{Input}: the parameter range of interest $[p^L,  p^U]$.
       \STATE \textbf{Initialization}: initialize step length $u_0$, error tolerance $\tau_e$, order of regression polynomials $q$, $i=2$, $p_1=p^L$, and compute $\overline{\text{ROM}}_1$.
       \REPEAT
       \STATE \textbf{Phase 1: Predictor-Corrector}
       \STATE Set $p_{i}=p_{i-1} + u_0$. If $p_{i}>p^U$, set $p_{i}=p^U$.    
       \STATE Build ROM$_{i}^\text{cand}$ with a MOR algorithm.
       \IF{i=2}
       \STATE $\overline{\text{ROM}}_{i}=\mathcal{P}_{\overline{\text{ROM}}_{i-1}}(\text{ROM}_{i}^\text{cand})$.
       \ELSE
       \STATE Compute ROM$_{i}^\text{pred}$ using extrapolation or regression of $\overline{\text{ROM}}_j$ ($j=1,2,\ldots,i-1$).
       \IF{$r(\overline{\text{ROM}}_{i-1}, \text{ROM}_{i}^\text{cand})>r(\text{ROM}_{i}^\text{pred},\text{ROM}_{i}^\text{cand})$}
       \STATE $\overline{\text{ROM}}_{i}=\mathcal{P}_{\text{ROM}_{i}^\text{pred}}(\text{ROM}_{i}^\text{cand})$.
       \ELSE 
       \STATE $\overline{\text{ROM}}_{i}=\mathcal{P}_{\overline{\text{ROM}}_{i-1}}(\text{ROM}_{i}^\text{cand})$.
       \ENDIF
       \ENDIF
       \STATE \textbf{Phase 2: Adaptive Refinement}
       \STATE Set $i_h=i-1$.
       \REPEAT
       \STATE At $p^t=\frac{p_{i_h}+p_{i_h+1}}{2}$, build an interpolated ROM, namely $\text{ROM}^{I,t}$, and a true ROM, namely $\text{ROM}^{T,t}$.
       \IF{$e(\text{ROM}^{I,t},\text{ROM}^{T,t})<\tau_e$}
       \STATE $i_h=i_h+1$.
       \ELSE
       \STATE $i=i+1$.
       \FOR{$j=i-1$ \TO $i_h+1$}
       \STATE $\overline{\text{ROM}}_{j+1}\leftarrow \overline{\text{ROM}}_{j}$, $p_{j+1}\leftarrow p_j$.     
       \ENDFOR
       \STATE $\overline{\text{ROM}}_{i_h+1}=\mathcal{P}_{\overline{\text{ROM}}_{i_h}}(\text{ROM}^{T,t})$.
       \FOR{$j=i-1$ \TO $i_h+1$}\label{line:reorder}
       \STATE $\overline{\text{ROM}}_{j+1}\leftarrow \mathcal{P}_{\overline{\text{ROM}}_{i_h+1}}(\overline{\text{ROM}}_{j+1})$.
       \ENDFOR
       \ENDIF
       \UNTIL{$i_h=i$.}
       \UNTIL{$p_i=p^U$.}
       \STATE \textbf{Optional Post-Processing: Regression}
       \FOR{$u=1$ \TO $n_d$, $v=1$ \TO 4}
       \STATE Apply the regression algorithm to $\overline{D}_1(u,v)$, $\overline{D}_2(u,v)$, \ldots, $\overline{D}_{i}(u,v)$ to compute the coefficients of the polynomial $d^r_0(u,v)$, $d^r_1(u,v)$, \ldots, $d^r_q(u,v)$.
       \ENDFOR
       \FOR{$u=1$ \TO $n_s$, $v=1$ \TO 2}
       \STATE Apply the regression algorithm to $\overline{S}_1(u,v)$, $\overline{S}_2(u,v)$, \ldots, $\overline{S}_{i}(u,v)$ to compute the coefficients of the polynomial $s^r_0(u,v)$, $s^r_1(u,v)$, \ldots, $s^r_q(u,v)$.
       \ENDFOR
     \end{algorithmic}
     \vskip-.9\baselineskip
      \noindent\makebox[\linewidth]{\rule{\linewidth}{0.8pt}}\vskip-.9\baselineskip
\end{alg}
Note that when pole-matching becomes difficult, i.e., too many swaps take place, we can simply give it up, save the ROM (with pole unmatched) for the moment and insert another ROM for refinement.The pole-matching will be done automatically at a later stage. A further remark is that in line~\ref{line:reorder}, we can also conduct fewer pole-matchings, e.g., only for $i_h+1$.

\subsection{The Offline-Online Strategy}
The proposed PMOR method can be implemented in an offline-online manner.
\begin{itemize}
\item In the offline phase, we use Algorithm~\ref{alg:adaptive} to explore the parameter range of interest.
  It is relatively expensive: we need to adaptively choose the parameter values, at which we build ROMs, compute the ROMs, conduct pole matching, and optionally, perform regression.
\item In the online phase, we simply interpolate the repository ROMs or evaluate the polynomials computed from regression. The online phase is computationally very cheap.
\end{itemize}

\section{Numerical Results}\label{sec:num_results}
In this section, we use two numerical examples to test the adaptive pole-matching PMOR method.

\subsection{The Nonlinear Parametric ``\texttt{\textit{FOM}}'' Model}
This academic example is adapted from the parametric ``\texttt{\textit{FOM}}'' model in~\cite{morIonA14} to introduce more complex system dynamics such as nonlinear parametric dependency and pole crossing, since in the original model, the parameter dependency is linear\footnote{ The parametric ``\texttt{\textit{FOM}}'' model in~\cite{morIonA14} is adapted from the nonparametric ``\texttt{\textit{FOM}}'' model in~\cite{morChaV02}}.
The model is of the form
\begin{align*}
  (s\mathcal{I}-\mathcal{A}(p))X(s,U)&=\mathcal{B}U, \\
  Y&=\mathcal{C} X(s,U),
\end{align*}
where $\mathcal{C}=[100,100,100,100,100,100,100,100,1,\ldots,1]\in\mathbb{R}^{1\times 1008}$, $B=C^T$, and $\mathcal{A}=\text{diag}\{\mathcal{A}_1,\mathcal{A}_2,\mathcal{A}_3,\mathcal{A}_4\allowbreak,1,\allowbreak2,\allowbreak3,\allowbreak\ldots,1000\}\in\mathbb{R}^{1008 \times 1008}$ with
\begin{align}
  \mathcal{A}_1&=\left[
    \begin{array}{cc}
        4p-42 & 8p+200\\
        -8p-200 & 4p-42
    \end{array}
    \right], \nonumber\\
  \mathcal{A}_2&=\left[
    \begin{array}{cc}
        2p-50 & p^2+4p+210\\
        -p^2-4p-210 & 2p-50
    \end{array}
    \right], \nonumber\\
  \mathcal{A}_3&=\left[
    \begin{array}{cc}
        -25+p & 100+p^2\\
        -100-p^2 & -25+p
    \end{array}
    \right], \nonumber\\
  \mathcal{A}_4&=\left[
    \begin{array}{cc}
        -25+2p & 150-p^2\\
        -150+p^2 & -25+2p
    \end{array}
    \right]. \nonumber
\end{align}
In this model of order 1008, the parameter range of interest is $[-10,10]$.
In Algorithm~\ref{alg:adaptive}, we set the initial conditions $u_0=\frac{\pi}{3}$, $\tau_e=0.001$ and $q=5$.
For all figures, the relative error is computed as $\frac{\left|\int_1^{1000}  \mathcal{H}(\omega,p)-  H(\omega,p)\, \rm{d}\omega \right|}{\left|\int_1^{1000}  \mathcal{H}(\omega,p) \rm{d}\omega \right|}$.

\begin{figure}[htp]
  \centering
  \subfloat[The Poles of Repository ROMs]{\label{subfig:poles}\includegraphics[width=.49\linewidth]{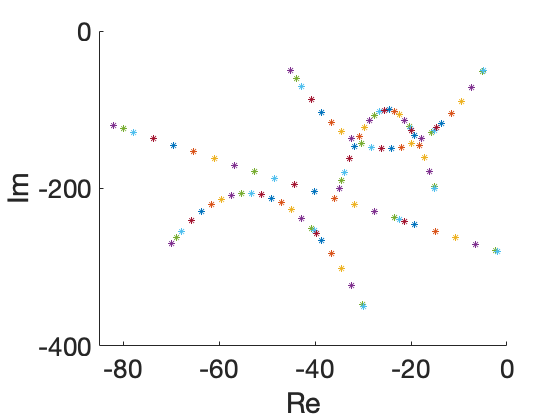}}
  \subfloat[The Relative Error]{\label{subfig:intp_error}\includegraphics[width=.49\linewidth]{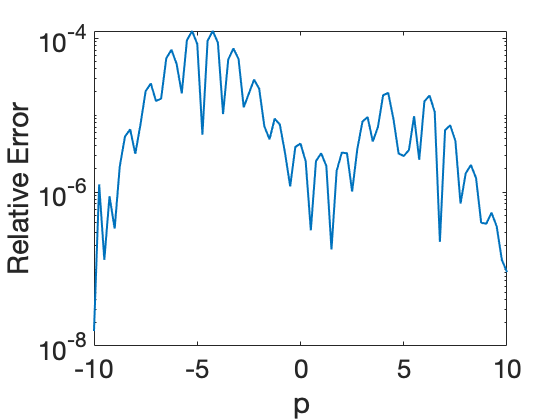}}\hskip0ex
  \caption{Interpolation Results}
  \label{fig:intp}
\end{figure}

\begin{figure}[htp]
  \centering
  \subfloat[The Relative Error]{\label{subfig:reg_error}\includegraphics[width=.49\linewidth]{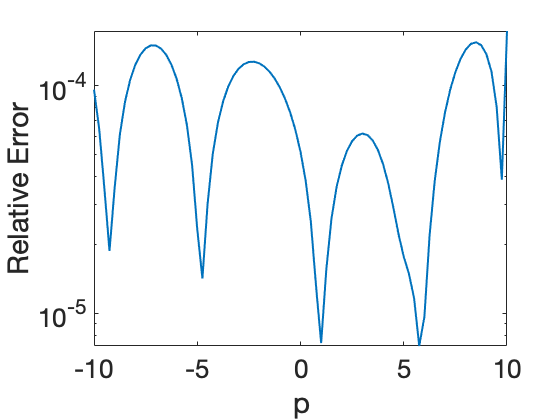}}\hskip0ex
  \subfloat[The Regressive Polynomials]{\label{subfig:reg_poles}\includegraphics[width=.49\linewidth]{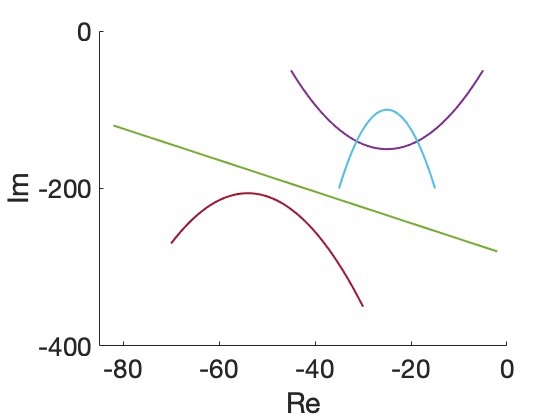}}
  \caption{Regression Results}
  \label{fig:regression}
\end{figure}

\begin{figure}[htp]
  \centering
  \subfloat[Interpolation Error]{\label{subfig:intp_error_wo_adaptive}\includegraphics[width=.49\linewidth]{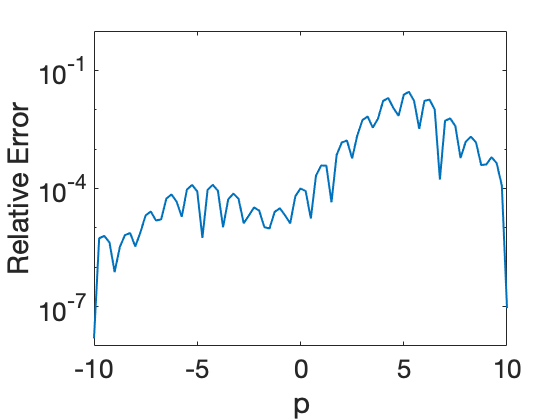}}\hskip0ex
  \subfloat[Regression Error]{\label{subfig:reg_error_wo_adaptive}\includegraphics[width=.49\linewidth]{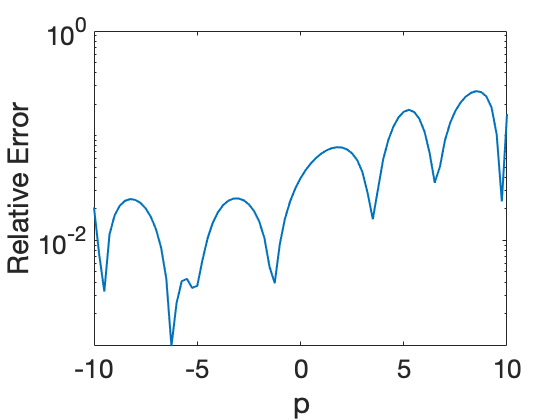}}\\
  \subfloat[The Regressive Polynomials]{\label{subfig:reg_poles_wo_adaptive}\includegraphics[width=.49\linewidth]{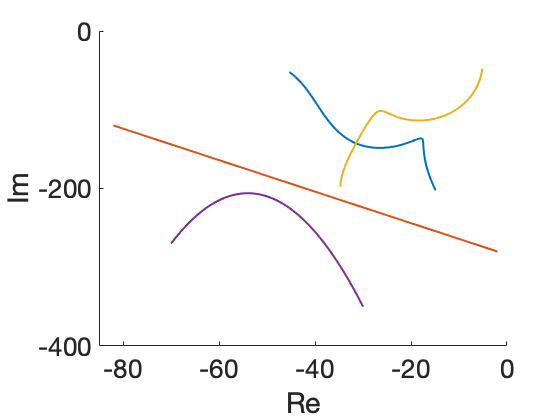}}
  \caption{Results without Adaptive Refinement}
  \label{fig:results_wo_adaptive}
\end{figure}

In Fig.~\ref{fig:intp}, we show the results of interpolation, i.e., when we skip the post-processing for regression and use the repository ROMs directly for interpolation. In total, 24 repository ROMs are built. In Fig.~\ref{fig:regression}, we show the numerical results when the post-processing for regression is also performed. We observe that using order-5 polynomials for regression results in pretty accurate results. The data storage is reduced by a factor of $\frac{24}{5}$. In Fig.~\ref{fig:results_wo_adaptive}, we plot the results when we skip the adaptive refinement. In this case, 21 repository ROMs are built and we observe that some poles are wrongly matched and the regression follows wrong branches. This shows the importance of the adaptive refinement in guaranteeing accuracy, especially when the step length is large. The much better numerical result using adaptive refinement is only at the cost of 3 more repository ROMs.

\subsection{The Branchline Coupler: a Microwave Device}
In this section, we apply our method to a branchline coupler model, which is a discretization of a time-harmonic Maxwell's equation. The full-order model is of the form
\begin{align}
  \left(\mathcal{K}(\mu)-s\mathcal{M}\right)X&=B,\nonumber\\
  Y&=CX,
\end{align}
where $\mathcal{K}(\mu)=\frac{1}{\mu}\mathcal{K}_0$, $\mathcal{K}_0,\mathcal{M}\in\mathbb{R}^{33051\times 33051}$, $B\in\mathbb{R}^{33051\times 1}$, $C\in\mathbb{R}^{1 \times 33051}$ and $s=\omega^2$. For detailed description of the model, we refer to~\cite{morwiki_branchcouple,morHesB13}. The repository ROMs of order 10 are built by a Krylov method~\cite{morAnt05}, in which we use the interpolating expansion points $s=1, 25, 50, 75, 100$ and order 2 for each of these interpolating expansion points. Since the reduced matrix of $\mathcal{K}(\mu_i)$ is nonsingular, the repository ROMs can be easily written into the form~\eqref{eq:para_ROM}.
To build a pROM for the parameter range $\mu\in[0.5,0.9]$, Algorithm~\ref{alg:adaptive} built 18 repository ROMs under the tolerance $\tau_e=0.00001$. Figure~\ref{fig:branchline} shows that both the interpolated pROM and the regressed pROM capture the dynamics of the system well. 

\begin{figure}[htp]
  \centering
  \subfloat[The Response of the FOM]{\label{subfig:branchline_frf}\includegraphics[width=.7\linewidth]{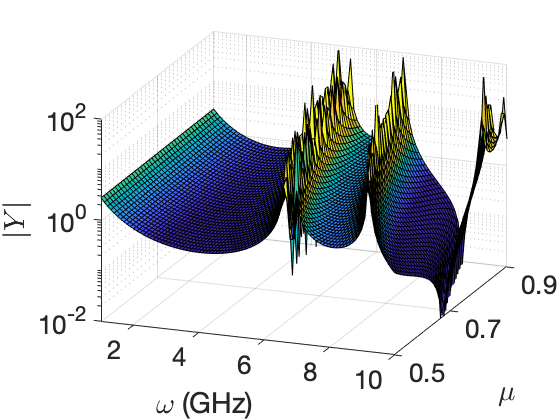}}\\
  \subfloat[The Relative Error of the Interpolated pROM]{\label{subfig:branchline_interp_err}\includegraphics[width=.7\linewidth]{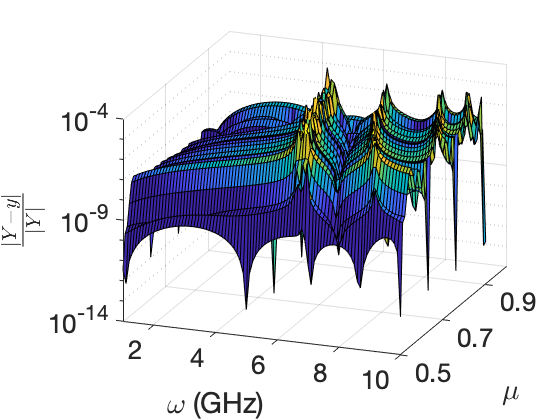}}\\
  \subfloat[The Relative Error of the Regressed pROM (order 7 polynomials are used for regression)]{\label{subfig:branchline_reg_err}\includegraphics[width=.7\linewidth]{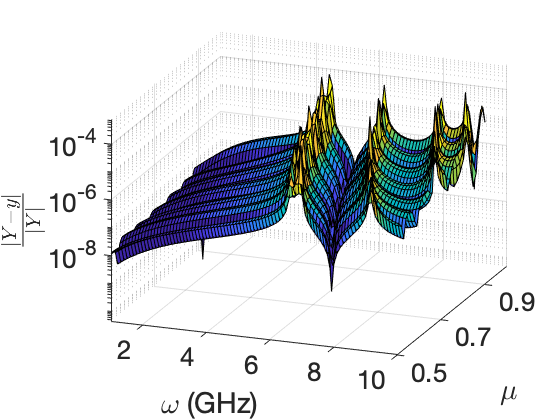}}  
  \caption{Numerical Results for the Branchline Example. Here, $Y$ and $y$ stand for the outputs of the FOM and the pROM, respectively.}
  \label{fig:branchline}
\end{figure}

\section{Further Discussions}
Here are some further discussions.
\begin{itemize}
\item The regression is successful only when all matchings are correct for dominant poles. The influence of mismatched poles on the interpolated pROM is only local, which is normally not too bad because the wrongly matched poles normally have similar positions and residues. For regression, however, the influence will be global because wrong branches are chosen. In this case, we actually try to regress a non-smooth function with polynomials, which is difficult. This phenomenon is shown in Fig.~\ref{fig:results_wo_adaptive}.
\item We would also like to emphasize that the interpolation approach is more general than the regression approach. The interpolation approach works well as long as the parameter dependence can be \emph{locally} captured by the basis functions, e.g., using polynomials for exponential functions or trigonometric functions. The regression method, on the other hand, works well only for the cases that the parameter dependence can be \emph{globally} captured by the basis functions. The interpolation approach even has potential to deal with eigenvalue bifurcation: even without any further considerations, the interpolation-based pROM is accurate as long as the parameter value is not too close to the bifurcation point. More detailed treatments of eigenvalue bifurcation will be future work. Therefore, we use regression as an optional post-processing technique. When the error between the interpolation-based pROM and the regression-based pROM is not sufficiently small, we discard the regression-based pROM and just use the interpolation-based pROM.
\item The proposed method only works when Assumption~\ref{assump:pole_change} holds. When $A^{(i)}$ in~\eqref{eq:para_ROM} is non-normal, its poles can be highly sensitive to perturbations of the parameters. We observed this phenomenon when we build order-22 Krylov-type ROMs for the branchline coupler model.
\item The proposed method can be easily extended to multiple-input multiple-output systems using the results in~\cite{YFBE19}.
\item Stability can easily be  preserved since our method deals directly with the poles. It is apparent that using linear interpolation, the stability will automatically be preserved since the linear interpolation of two poles in the left half-plane still lies in the left half-plane. Using other interpolation methods, although the stability is generally not automatically preserved, we can check whether the maximal real part of the interpolated (regressed) pole location is negative. If it is not, we can simply try another interpolation (regression) method and check it again. In the worst case, we can resort to linear interpolation for the parameter interval on which stability is difficult to be preserved with other interpolation/regression methods.
\end{itemize}

\section{Conclusions}\label{sec:con}
An adaptive parametric reduced-order modeling method based on interpolation/regression of adaptively built ROMs is proposed.
For correct interpolation/regression, we convert all ROMs to the pole-residue realization and propose to use a branch and bound method to conduct pole-matching efficiently. To explore the parameter range efficiently and accurately, we propose to use a predictor-corrector technique and an adaptive refinement strategy. A regression method is proposed to further reduce data storage. The numerical results verify the high accuracy and robustness of the proposed method.

\ifCLASSOPTIONcaptionsoff
  \newpage
\fi



%

\bibliography{/Users/yao/tex/bib/all,/Users/yao/tex/bib/mor} 
\bibliographystyle{IEEEtran}

\end{document}